\definecolor{orange}{rgb}{1,0.5,0}
\DeclareMathAlphabet{\mathpzc}{OT1}{pzc}{L}{it} 
\def\vep{\varepsilon}
\newtheorem{definition}{Definition}[section]
\newtheorem{proposition}[definition]{Proposition}
\newtheorem{theorem}{Theorem}
\newtheorem{corollary}[definition]{Corollary}
\newtheorem{remark}[definition]{Remark}
\newtheorem{lemma}[definition]{Lemma}
\def\geq{\geqslant}
\def\R{\mathbb{R}}
\def\T{\mathbb{T}}
\newcommand{\raz}{{\mathbf 1}}
\def\eps{\varepsilon}
\def\Z{\mathbb{Z}}
\def\N{\mathbb{N}}
\def\cF{\mathcal F}
\newcommand{\bea}{\begin{eqnarray}}
  \newcommand{\eea}{\end{eqnarray}}
  \newcommand{\beab}{\begin{eqnarray*}}
  \newcommand{\eeab}{\end{eqnarray*}}
  \newcommand{\be}{\begin{equation}}
  \newcommand{\ee}{\end{equation}}
\newcommand{\xbm}{(X,\mathcal{B},\mu)}
\newcommand{\ct}{\mathcal{T}}
\title{Flows with Ratner's property have discrete essential centralizer}
\author{Adam Kanigowski, Mariusz Lema\'nczyk}
\begin{document}
\baselineskip=14pt \maketitle

\begin{abstract}We show that a free, ergodic action of $\R$ with (finite) Ratner's property has countable discrete essential centralizer. Moreover, we show that such flows are mildly mixing.
\end{abstract}

\section{Introduction} When studying dynamics of  horocycle flows acting on the unit tangent spaces of surfaces with constant negative curvature, M.\ Ratner in 1980's discovered a special property, now called Ratner's property, which is a particular  way of divergence of orbits of nearby points.  Namely, the speed of such divergence is polynomial and this seems to be a characteristic property for so called {\em parabolic} dynamics. This property was used by Ratner \cite{Ra2}  to prove some important joining rigidity phenomena in the class of horocycle flows (see also \cite{Ka-Th}, \cite{Th}). Namely, for every ergodic joining, different from product measure, of an ergodic flow $(T_t)$ acting on a probability standard Borel space $\xbm$ with a flow having Ratner's property  the projection map on the $X$-coordinate has finite fibers. Moreover, flows with Ratner's property enjoy pairwise independent property (PID), that is, any self-joining (of arbitrary order) which is pairwise independent is just product measure.
As noticed in \cite{Ju-Ru2}, the PID property  forces  mixing to be mixing of all orders. During the last decade many other flows were shown to enjoy Ratner's property (with some modifications of the original definition but keeping the aforementioned rigidity phenomena).   Mixing of all orders of horocycle flows was known before Ratner's work, see \cite{Ma}, but it is only very recently that, via Ratner's property,  mixing of all orders was established for some  classes of smooth flows on surfaces \cite{Fa-Ka} (see also \cite{Ka-Ku-Ul}). Ratner's property itself however does not imply mixing. Indeed, all other known classes of flows satisfying Ratner's property, see \cite{Fr-Le1}, \cite{Fr-Le3}, \cite{Fr-Le-Le}, \cite{Ka1}, \cite{Ka2}, \cite{Ka-Ku}, are not mixing. On the other hand, all these examples are mildly mixing. As a matter of fact, it was already asked by J.-P.\ Thouvenot in the 1990's  whether Ratner's property is compatible with the rigidity property of flows. More precisely, Thouvenot asked whether we can have a flow $(T_t)_{t\in\R}$ with Ratner's property acting on a probability standard Borel space $\xbm$ for which for some $t_n\to\infty$, we have $f\circ T_{t_n}\to f$ for each $f\in L^2\xbm$. Recall that mild mixing can be be defined as the absence of non-trivial rigid factors, so no one of known examples of flows with Ratner's property was rigid.

In this note, we  will prove the following results.
\begin{theorem}\label{main} Assume that a measurable, measure-preserving flow $\mathcal{T}=(T_t)$ is free, ergodic and enjoys Ratner's property. Then its essential centralizer $EC(\mathcal{T}):=C(\mathcal{T})/\{T_t:\:t\in\R\}$ is discrete and countable.
\end{theorem}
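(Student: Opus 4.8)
The plan is to prove that the one-parameter subgroup $\{T_t:t\in\R\}$ is \emph{open} in the Polish group $C(\mathcal T)$ equipped with its weak topology. Once this is done the theorem follows by soft arguments only: an open subgroup of a topological group is automatically closed, so the quotient $EC(\mathcal T)=C(\mathcal T)/\{T_t:t\in\R\}$ (which is a group, $\{T_t:t\in\R\}$ being central in $C(\mathcal T)$) is discrete, and it is countable because $C(\mathcal T)$ is separable while the cosets of an open subgroup form a family of pairwise disjoint non-empty open sets. By metrizability and homogeneity, openness of $\{T_t:t\in\R\}$ is equivalent to the assertion: whenever $S_n\in C(\mathcal T)$ and $S_n\to\id$ weakly, then $S_n\in\{T_t:t\in\R\}$ for all large $n$. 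The only fact about $C(\mathcal T)$ we shall use is this: for $S\in C(\mathcal T)$ the set $E(S):=\{x:\,Sx\in\{T_sx:s\in\R\}\}$ is $(T_t)$-invariant, hence of measure $0$ or $1$ by ergodicity, and if $\mu(E(S))=1$ then the $\mu$-a.e.-defined displacement $x\mapsto s(x)$ (given by $Sx=T_{s(x)}x$) is itself $(T_t)$-invariant --- because $S$ commutes with the flow --- hence constant, so $S=T_{t_0}\in\{T_t:t\in\R\}$. Thus it suffices to prove that $\mu(E(S_n))=0$ can hold for at most finitely many $n$.

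Assume, for contradiction, that $\mu(E(S_n))=0$ for $n$ in an infinite set $\mathcal N$. Fix a Borel metric $d$ generating $\mathcal B$, let $P\subset\R\setminus\{0\}$ be the finite set from Ratner's property, fix a small $\eps>0$ and a large $N$, and let $\delta>0$ and $Z$ (with $\mu(Z)>1-\eps$) be furnished by Ratner's property. Since $S_n\to\id$ weakly, comparing $S_n$ with $\id$ on a finite partition of $X$ into pieces of diameter $<\delta$ shows that $\mu(\{x:\,d(x,S_nx)\ge\delta\})\to0$, so for $n$ large the set $G_n:=Z\cap S_n^{-1}Z\cap\{x:\,d(x,S_nx)<\delta\}$ satisfies $\mu(G_n)>1-3\eps$. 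For $n\in\mathcal N$ and a.e.\ $x\in G_n$, the point $S_nx$ is not on the $(T_t)$-orbit of $x$, lies within $\delta$ of $x$, and belongs to $Z$ together with $x$; Ratner's property therefore yields $p=p(x)\in P$ and a window $[M,M+L]$, with $L$ large and $M$ small relative to $L$, along which $d(T_mx,\,T_{m+p}S_nx)<\eps$ for a set of $m\in[M,M+L]$ of proportion $>1-\eps$. Since $S_n$ commutes with $(T_t)$ we have $T_{m+p}S_nx=T_pS_n(T_mx)$, so this means precisely that the orbit of $x$ spends a proportion $>1-\eps$ of the window $[M,M+L]$ inside the set $B_n:=\{z:\,\exists p\in P,\ d(z,T_pS_nz)<\eps\}$; as $M/L$ is small, the time-average of $\raz_{B_n}$ over $[0,M+L]$ is at least some constant $c'=c'(\eps)>0$. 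By the maximal ergodic inequality for the flow we conclude $\mu(B_n)\ge\tfrac12 c'\,\mu(G_n)\ge c_1(\eps)>0$ for all large $n\in\mathcal N$, with $c_1(\eps)$ bounded away from $0$ as $\eps\to0$.

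It remains to convert this into a violation of freeness. By the pigeonhole principle there is $p_n\in P$ with $\mu(\{z:\,d(z,T_{p_n}S_nz)<\eps\})\ge c_1/|P|$; substituting $w=T_{p_n}z$ and using $T_{p_n}S_n=S_nT_{p_n}$, this reads $\mu(\{w:\,d(T_{-p_n}w,\,S_nw)<\eps\})\ge c_1/|P|$, and since $\mu(\{w:\,d(w,S_nw)\ge\eps\})\to0$ we obtain $\mu(\{w:\,d(w,T_{-p_n}w)<2\eps\})\ge c_1/(2|P|)$ for all large $n\in\mathcal N$. We now run this with $\eps=\eps_j\downarrow0$, choosing indices $n_j\in\mathcal N$ increasing and, at stage $j$, large enough for the preceding inequality to hold; pigeonholing once more over the finite set $P$ produces a single $q\in P$ and a subsequence along which $\mu(\{w:\,d(w,T_{-q}w)<2\eps_j\})$ remains bounded below by a fixed positive constant. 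Letting $j\to\infty$ and using $\bigcap_j\{w:\,d(w,T_{-q}w)<2\eps_j\}=\Fix(T_{-q})$ gives $\mu(\Fix(T_{-q}))>0$ with $-q\neq0$, contradicting the freeness of $(T_t)$. Hence $\mathcal N$ is finite, $\{T_t:t\in\R\}$ is open in $C(\mathcal T)$, and the theorem is proved.

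The soft ingredients --- Polishness of $C(\mathcal T)$, the elementary topology of open central subgroups, and the measurability and constancy of the displacement function --- I would handle briskly. The substance of the argument, and the step I expect to be the main obstacle, lies in the second paragraph together with the opening of the third: one must match the precise quantitative form of (finite) Ratner's property --- the windows $[M,M+L]$ with their normalisation, the \emph{finite} set of admissible shifts $P$, and the clause excluding pairs lying on a common orbit --- with the maximal-inequality averaging, and then carry the various constants cleanly through the double passage to the limit ($n\to\infty$ and $\eps\to0$) so as to extract a genuinely non-trivial fixed-point set. This is exactly where freeness is used; and a variant of the same scheme, applied to a hypothetical rigidity sequence $T_{t_n}$ with $t_n\to\infty$ in place of elements of $C(\mathcal T)\setminus\{T_t:t\in\R\}$, should likewise yield the mild mixing asserted in the abstract.
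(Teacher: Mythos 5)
Your overall architecture coincides with the paper's: both reduce Theorem~\ref{main} to the openness of $\{T_t:t\in\R\}$ in $C(\mathcal T)$ (the paper's Lemma~\ref{tech}), and your soft topological deductions (open implies closed, discrete separable quotient is countable, the displacement-function argument behind the paper's property $(P5)$) are all sound and essentially identical to the paper's. The problem lies in the quantitative core of your second and third paragraphs --- exactly the step you flagged as the main obstacle.

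The gap: you read the Ratner windows as having ``$M$ small relative to $L$''. Definition~\ref{def:Ratner} only guarantees $L/M\ge\kappa(\eps)$, i.e.\ $M\le L/\kappa(\eps)$, and nothing in the definition prevents $\kappa(\eps)\to 0$ as $\eps\to 0$; in particular $M$ may be enormously larger than $L$. Consequently the time-average of $\raz_{B_n}$ over $[0,M+L]$ that you feed into the maximal ergodic inequality is only bounded below by $(1-\eps)\frac{L}{M+L}\ge(1-\eps)\frac{\kappa(\eps)}{1+\kappa(\eps)}$, so your constant $c_1(\eps)$ carries a factor $\kappa(\eps)/(1+\kappa(\eps))$ and is \emph{not} bounded away from $0$ as $\eps\to0$. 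The final step --- letting $\eps_j\downarrow 0$ and concluding $\mu(\Fix(T_{-q}))>0$ --- therefore collapses: you only obtain $\mu(\{w:\,d(w,T_{-q}w)<2\eps_j\})\ge c_1(\eps_j)/(2|P|)$ with a right-hand side that may tend to $0$, which is perfectly consistent with freeness; and for a single fixed $\eps$ the conclusion $\mu(\{w:\,d(w,T_{-q}w)<2\eps\})>0$ contradicts nothing. The paper avoids this by invoking freeness \emph{first}, at a fixed scale: it chooses $k_0$ with $\mu(A_{k_0})\ge 1-10^{-4}$, where $A_{k_0}$ is the set of points $k_0^{-1}$-separated from all their $T_p$-images, then takes $\eps<k_0^{-2}$, and uses the pointwise ergodic theorem for an ergodic time-$t_0$ map together with Egorov's theorem to get density $\ge 0.99$ of visits to $A_{k_0}$ \emph{inside the window} $[M,M+L]$ itself --- the $\kappa$-dependence is absorbed into the threshold $N_0$ for $M,L$ (a free parameter in the Ratner definition), not into the density bound. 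This contradicts \eqref{eqML} directly at the single scale $\eps$, with no limit over $\eps$. Your maximal-inequality substitute cannot reproduce this, because its weak-type loss is precisely the factor $\kappa/(1+\kappa)$ that degenerates. To repair the proof you would need either the extra hypothesis that $\kappa(\eps)$ stays bounded below (true in some examples but not part of Definition~\ref{def:Ratner}), or to replace the maximal-inequality averaging by the paper's Egorov-based window estimate.
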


As the essential centralizer of rigid flows is uncountable, this in particular, answers Thouvenot's question.

\begin{theorem}\label{main2} Assume that a measurable, measure-preserving flow $\mathcal{T}=(T_t)$ is free, ergodic and has Ratner's property. Then for each $S\in C(\mathcal{T})$ either $S$ is mildly mixing or $S$ is of finite order.
\end{theorem}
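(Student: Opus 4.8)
\emph{Strategy.} We prove the contrapositive: if $S\in C(\cT)$ is not mildly mixing, then $S$ has finite order. We use Theorem~\ref{main} (so $EC(\cT)$ is countable and discrete, the subgroup $\{T_t:t\in\R\}$ is open, hence closed, in $C(\cT)$, and — using ergodicity, freeness and the fact, proved in this paper, that $\cT$ is mildly mixing — the orbit map $t\mapsto T_t$ is a proper topological embedding of $\R$), together with the joining rigidity coming from Ratner's property \cite{Ra2,Ka-Th,Th}: every ergodic self-joining of $\cT$ other than $\mu\times\mu$ is a finite extension of each of its marginals.

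\emph{Step 1: a flow-invariant rigid factor.} By Furstenberg's $\mathrm{IP}$-characterisation of mild mixing, the failure of mild mixing of $S$ yields a non-trivial $S$-invariant sub-$\sigma$-algebra $\mathcal A$, an $\mathrm{IP}$-sequence $(n_\alpha)$ and an idempotent ultrafilter $q$ with $U_S^{n_\alpha}\to E_{\mathcal A}$ along $q$ in the weak operator topology, $E_{\mathcal A}$ the conditional expectation onto $\mathcal A$; take $\mathcal A$ maximal, i.e.\ the range of the self-adjoint idempotent $\lim_q U_S^{n_\alpha}$. Since $S$ commutes with $\cT$, one has $U_S^{n_\alpha}(g\circ T_{-t})=(U_S^{n_\alpha}g)\circ T_{-t}$, so $T_t\mathcal A$ — and, taking bounded products, $\mathcal A\vee T_t\mathcal A$ — is rigid for $S$ along the \emph{same} $(n_\alpha)$; by maximality $T_t\mathcal A\subseteq\mathcal A$ for every $t$, so $\mathcal A$ is flow-invariant. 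Hence, in the weakly compact space of self-joinings of $\cT$, the graph joinings $\mu_{S^{n_\alpha}}:=(\id\times S^{n_\alpha})_*\mu$ converge along $q$ to $\mu\otimes_{\mathcal A}\mu$, the relatively independent self-joining over the flow-invariant factor $\mathcal A$; and $\mu\otimes_{\mathcal A}\mu\neq\mu\times\mu$ because $\mathcal A$ is non-trivial.

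\emph{Step 2: the limit joining is a finite average of graphs.} Decompose $\mu\otimes_{\mathcal A}\mu$ into $\cT$-ergodic components $\rho_\omega$. Since $\mathcal A$ is flow-invariant, $(\mu\otimes_{\mathcal A}\mu)|_{\mathcal A\otimes\mathcal A}$ is the diagonal $\Delta_{\mathcal A}$, an extreme point of the couplings of $(\mathcal A,\mu|_{\mathcal A})$ with itself, so $\rho_\omega|_{\mathcal A\otimes\mathcal A}=\Delta_{\mathcal A}$ for a.e.\ $\omega$ and in particular $\rho_\omega\neq\mu\times\mu$. By Ratner's property each $\rho_\omega$ is a finite extension; together with the countability of $EC(\cT)$ one upgrades this to: each $\rho_\omega$ is a graph joining $\mu_{W_\omega}$ of an element $W_\omega\in C(\cT)$, and $\rho_\omega|_{\mathcal A\otimes\mathcal A}=\Delta_{\mathcal A}$ forces $W_\omega|_{\mathcal A}=\id$. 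Comparing the disintegration $x\mapsto\mu_{[x]_{\mathcal A}}$ of $\mu\otimes_{\mathcal A}\mu$ over the first coordinate with $x\mapsto\sum_W c_W\,\delta_{Wx}$, and using that the conditional measures attached to two points of one $\mathcal A$-atom agree, one finds that $c$ is invariant under the group operation; hence the set $H$ of occurring $W$'s is a \emph{finite} subgroup of $C(\cT)$, $H|_{\mathcal A}=\{\id\}$, $\mu\otimes_{\mathcal A}\mu=\frac1{|H|}\sum_{W\in H}\mu_W$, and $\cT$ is the $|H|$-to-$1$ extension of $\cT|_{\mathcal A}$ whose fibres are the $H$-orbits, with $\mathcal A=\mathcal B^{H}$. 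Also $(S\times S)_*(\mu\otimes_{\mathcal A}\mu)=\mu\otimes_{\mathcal A}\mu$, so $S$ normalizes $H$.

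\emph{Step 3: $H$ is trivial, and conclusion.} The crux is to show $\mathcal A=\mathcal B$, i.e.\ $|H|=1$. This is immediate when $S^m=\id$ for some $m$ (then $\lim_q U_S^{n_\alpha}$ is an idempotent power of a finite-order unitary, hence $\mathrm{Id}$), and I expect the main obstacle to be ruling out $|H|\geq2$ using that $\bar S\in EC(\cT)$ has infinite order: one must analyse the cocycle $\mathcal A\times\R\to H$ defining the extension together with the $S$-action on $H$ (a finite-order automorphism) and the constraint $U_S^{n_\alpha}\to E_{\mathcal A}=\frac1{|H|}\sum_{W\in H}U_W$, which says that $S$ becomes the identity on $\mathcal A$ and "spreads uniformly over the $H$-fibres" along the $\mathrm{IP}$-set; the point to extract is that, combined with the discreteness of $EC(\cT)$, this is incompatible with $\bar S$ having infinite order unless $H$ is trivial. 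Granting $\mathcal A=\mathcal B$, we have $U_S^{n_\alpha}\to\mathrm{Id}$, so $S^{n_\alpha}\to\id$ in $C(\cT)$; discreteness gives $S^{n_\alpha}\in\{T_t\}$ for $q$-many $\alpha$, and by Hindman's theorem we may assume $S^{n_\alpha}=T_{t_\alpha}$ for all $\alpha$, with $t_{\alpha\cup\beta}=t_\alpha+t_\beta$ for disjoint $\alpha,\beta$ and $t_\alpha\to0$ along $q$. Evaluating at the generators $p_i$ of the $\mathrm{IP}$-sequence gives $S^{p_i}=T_{t_i}$, so $\bar S$ has finite order: $S^m=T_s$. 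Finally, if $s\neq0$ then $S^{mn_\alpha}=(S^{n_\alpha})^m\to\id$ while $S^{mn_\alpha}=T_{sn_\alpha}$ with $|sn_\alpha|\to\infty$, contradicting properness of the orbit map; hence $s=0$ and $S^m=\id$.
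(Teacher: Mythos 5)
Your proposal has a genuine gap, and you acknowledge it yourself: Step 3 never proves the crux $|H|=1$. The sentences ``I expect the main obstacle to be ruling out $|H|\geq2$'' and ``the point to extract is that \dots this is incompatible with $\bar S$ having infinite order unless $H$ is trivial'' are a statement of intent, not an argument, and everything after ``Granting $\mathcal A=\mathcal B$'' is conditional on exactly this missing step. There is also an unjustified claim earlier: in Step 2 you pass from ``each ergodic component $\rho_\omega$ is a finite extension of its marginals'' to ``each $\rho_\omega$ is a graph joining $\mu_{W_\omega}$ with $W_\omega\in C(\mathcal T)$'', citing only the countability of $EC(\mathcal T)$. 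That upgrade is not routine: a finite-to-one ergodic self-joining need not be a graph, and countability of the essential centralizer does not by itself produce the required $W_\omega$.

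The paper's proof avoids this machinery entirely, and the move you are missing is to apply Theorem~\ref{main} \emph{to the factor} rather than to the whole space. If $S$ is not ergodic, Lemma~\ref{FL} exhibits $X$ as a finite ($k$-point) extension of the fixed factor $\mathcal A$ of $S$; since $S$ acts as the identity on the base it permutes the fibres, so $S^{k!}=Id$ directly, with no joinings involved. If $S$ is ergodic but not mildly mixing, take a non-trivial rigid factor $\mathcal F$ of $S$; it is $\mathcal T$-invariant, and by Proposition~\ref{prop1} the factor flow $\mathcal T|_{\mathcal F}$ is again free with Ratner's property. Weak mixing of $S|_{\mathcal F}$ together with Theorem~\ref{main} applied to $\mathcal T|_{\mathcal F}$ (which therefore cannot be rigid) rules out $(S|_{\mathcal F})^n\in\{T_t|_{\mathcal F}:t\in\R\}$ for $n\neq0$, and then the rigidity sequence of $S|_{\mathcal F}$ witnesses that $\{T_t|_{\mathcal F}:t\in\R\}$ is not open in $C(\mathcal T|_{\mathcal F})$, contradicting Lemma~\ref{tech} for the factor. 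Recasting your Step 3 along these lines --- working inside the rigid factor instead of trying to kill the group $H$ upstairs --- is what would close the gap.
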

We have the following immediate corollary (which also  answers Thouvenot's question):
\begin{corollary}\label{rat.rig} A flow with  Ratner's property is mildly mixing.
\end{corollary}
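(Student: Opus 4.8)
The plan is to deduce Corollary~\ref{rat.rig} from Theorem~\ref{main2} in two steps: first show that the time-one map $T_1$ is mildly mixing, and then upgrade this to mild mixing of the flow $\mathcal T$. The first step is immediate: since $\mathcal T$ is a flow, $T_1$ commutes with every $T_s$, so $T_1\in C(\mathcal T)$; and since $\mathcal T$ is free, $T_1^{\,n}=T_n\ne\id$ for every $n\ne0$, so $T_1$ has infinite order. Applying Theorem~\ref{main2} to $S=T_1$, we conclude that $T_1$ is mildly mixing.

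For the second step, recall that a flow fails to be mildly mixing precisely when it has a non-trivial rigid factor, so suppose toward a contradiction that there is a non-trivial $\mathcal T$-invariant sub-$\sigma$-algebra $\mathcal A$ and a sequence $t_n\to+\infty$ with $T_{t_n}|_{\mathcal A}\to\id$ in $\mathrm{Aut}$ (i.e.\ $\mu(T_{t_n}A\,\triangle\,A)\to0$ for all $A\in\mathcal A$). Write $t_n=m_n+r_n$ with $m_n\in\Z$ and $r_n\in[0,1)$, and pass to a subsequence so that $r_n\to r$. Using continuity of the flow and continuity of composition in $\mathrm{Aut}$, one gets $T_{m_n}|_{\mathcal A}=T_{t_n}|_{\mathcal A}\circ T_{-r_n}|_{\mathcal A}\to T_{-r}|_{\mathcal A}$, and hence
\[
T_{p_n}|_{\mathcal A}=T_{m_{n+1}}|_{\mathcal A}\circ\bigl(T_{m_n}|_{\mathcal A}\bigr)^{-1}\longrightarrow\id,\qquad p_n:=m_{n+1}-m_n\in\Z .
\]
Since $t_n\to+\infty$ while $(r_n)$ stays bounded, $m_n\to+\infty$, so $(p_n)$ is not eventually $0$. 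If $(p_n)$ is unbounded, pick a subsequence with $|p_n|\to\infty$; then $T_{p_n}|_{\mathcal A}\to\id$ exhibits $\mathcal A$ as a non-trivial rigid factor of $T_1$. If $(p_n)$ is bounded, some $p\ne0$ occurs infinitely often, so $T_p|_{\mathcal A}=\id$, whence $T_1^{\,np}|_{\mathcal A}=\id$ with $|np|\to\infty$, and again $\mathcal A$ is a non-trivial rigid factor of $T_1$. In either case $T_1$ is not mildly mixing, contradicting the first step. Therefore $\mathcal T$ is mildly mixing.

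The pieces I would still have to spell out are routine: the standard equivalence, for a single ergodic automorphism, between the "no non-constant rigid function" and "no non-trivial rigid factor" formulations of mild mixing, and the fact that a non-trivial rigid factor obstructs mild mixing of the ambient transformation. The one delicate point is the case analysis on $(p_n)$: a rigidity sequence $t_n\to\infty$ for the flow need not be close to $\Z$, so one has to verify that, after discarding the bounded fractional parts, the rigidity genuinely descends to an integer-time rigidity on the factor; the bounded case makes precise that the only alternative is that the factor carries a genuine period $p$, which is itself (trivially) an integer-time rigidity. This case split is the main, and essentially the only, obstacle.
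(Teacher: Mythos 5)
Your proof is correct and follows the route the paper intends: Corollary~\ref{rat.rig} is stated as an immediate consequence of Theorem~\ref{main2}, obtained exactly as in your first step by applying it to $S=T_1\in C(\mathcal T)$, which has infinite order because the action is free. The only content the paper leaves implicit is your second step (that mild mixing of the time-one map upgrades to mild mixing of the flow), and your treatment of a real rigidity sequence via its integer and fractional parts, with the bounded/unbounded case split for $p_n=m_{n+1}-m_n$, is a correct way to supply that standard but nontrivial detail.
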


\section{Basic definitions}\label{defs} Let $\xbm$ be a probability standard Borel space. By ${\rm Aut}\xbm$ denote the group of all (measure-preserving) automorphisms of that space. Each element $S\in{\rm Aut}\xbm$ can also be treated as a unitary operator on $L^2\xbm$: $Sf:=f\circ S$. Endowed with the strong operator topology, ${\rm Aut}\xbm$ becomes a Polish group.

Throughout, we consider only measurable, $\R$-representations in ${\rm Aut}\xbm$, i.e.\ flows $\mathcal{T}=(T_t)$ on $\xbm$. Measurability means that the map $X\times\R\ni (x,t)\mapsto T_tx$ is measurable. A flow is called {\em rigid} if for a sequence $(t_n)_{n\geq 1}$, $t_n\to +\infty$, we have
$$
\lim_{n\to+\infty }\mu(T_{t_n}A\triangle A)=0 \text{ for every } A\in \mathcal{B}.
$$
This is equivalent to saying that, as operators on $L^2\xbm$, $T_{t_n}\to Id$ strongly.

\begin{remark}\label{pwzbieznosc} \em
If $S_n\in {\rm Aut}\xbm$, $n\geq1$, and $S_n\to Id$ strongly on $L^2\xbm$, then
$S_n\to Id$ in measure. Indeed, no harm to assume that $X$ is a compact metric space with a metric $d$. Fix $\vep>0$ and cover $X$ by finitely many balls $B_1,\ldots,B_m$ o radius $\vep$. Now, given $\delta>0$, for $n\geq n_0$, we have $\mu(B_i\cap S_n^{-1}B_i)\geq(1-\delta)\mu(B_i)$, $i=1,\ldots, m$. Hence $\mu(\{x\in X:\:d(x,S_nx)\geq 2\vep\})<2m\delta$, and the claim follows.

If follows that if $S_n\to Id$ strongly on $L^2\xbm$, then for a subsequence $(n_k)$, we have $S_{n_k}\to Id$ $\mu$-a.e.\end{remark}

Following \cite{Fu-We},  a flow $\mathcal{T}$ is {\em mildly mixing} if it has no non-trivial rigid factors, i.e.
$$
\liminf_{t\to +\infty}\mu(T_{-t}B \triangle B)>0 \text { for every } B\in \mathcal{B}, 0<\mu(B)<1.
$$
Let
$C(\mathcal{T}):=\{S\in {\rm Aut}\xbm\;:\;ST_t=T_tS \text{ for every }t\in\R\}$ denote the {\em centralizer} of $\mathcal{T}$
and let the {\em essential centralizer} of $\mathcal{T}$ be defined as
$$
EC(\mathcal{T})=C(\mathcal{T})/\{T_t\;:\;t\in \R\}.
$$
Then $C(\mathcal{T})$ is closed in ${\rm Aut}\xbm$, hence is a Polish group. Moreover, $EC(\mathcal{T})$ is also a group, although its topological properties depend on whether the subgroup $\{T_t:\:t\in\R\}$ is closed.

\begin{remark}\label{unc}\em It $\mathcal{T}$ is rigid then $EC(\mathcal{T})$ is uncountable. This result is folklore but we provide an argument for completeness.

First notice that in a Polish Abelian group $G$ if we have a dense subgroup $H$ then either $H=G$ or $H$ is a set of first category. Indeed, if $H$ is of second category then $HH^{-1}$ contains a neighborhood $U$ of $1$. Set $F:=\bigcup_{m\geq0}U^m$ to be the group generated (algebraically) by $U$. We have $F\subset H$. Now, $F$ is open (in $G$). But $H$ is a union of cosets of $F$: $H=\bigcup_{h\in H}hF$, so $H$ is also open. But it is also closed (the complement of $H$ is a union of cosets of $H$), so $H$ is a clopen subgroup. Since it is dense, $H=G$.

If $H$ is of first category and if $G=\bigcup_{i\geq1}g_iH$ then $G$ is still of first category which is a contradiction with the fact that $G$ is Polish.

In our context, we use this for $H:=\{T_t:\:t\in\R\}\subset
\overline{\{T_t:\:t\in\R\}}=:G$.\footnote{This argument has been communicated to us by A.\ Danilenko and replaced our first argument based on the open map theorem for Polish groups together with the fact that a solenoidal group is either $\R$, or it is compact, or else it is not locally compact.}
\end{remark}
For joining theory of dynamical systems, we refer the reader to \cite{Gl}.

\subsection{Ratner's property}
Recall now the notion of (finite) Ratner's property introduced in \cite{Fr-Le1}. This is a weakening of the original Ratner's property introduced in \cite{Ra2} in the context of horocycle flows. Assume that $X$ is a $\sigma$-compact metric space with a metric $d$. Let $(T_t)\subset {\rm Aut}\xbm$ be an ergodic flow.

\begin{definition}\label{def:Ratner}\em Fix a finite set $P$ such that $0\notin P$ and $t_0\in \R\setminus\{0\}$. $(T_t)$ is said to have the {\em $R(t_0,P)$-property}  if for every $\eps>0$ and $N\in \N$ there exist $\kappa=\kappa(\eps)>0$, $\delta=\delta(\eps,N)$ and a set $Z=Z(\eps,N)\subset X$, $\mu(Z)>1-\eps$ such that for every $x,y\in Z$, $x$ not in the orbit of $y$ and $d(x,y)<\delta$ there exist $M=M(x,y),L=L(x,y)$, $M,L\geq N$ and $\frac{L}{M}\geq \kappa$ and $p=p(x,y)\in P$ such that
\begin{equation}\label{eqML}
\frac{1}{L}\left\vert\left\{n\in[M,M+L]\;:
\;d(T_{nt_0}x,T_{nt_0+p}y)<\eps\right\}\right\vert>1-\eps.
\end{equation}
\end{definition}
We say that $(T_t)$ has {\em Ratner's property} (with the set $P$) if the set
$$\{s\in \R\:;\; (T_t) \text{ has } R(s,P)-\text{property}\}$$
is uncountable.

\begin{remark}\label{mes:Rat}\em
\begin{enumerate}
\item If $\xbm$ is a probability standard Borel space and there is no good metric structure on $X$ then we say that a flow  $\ct=(T_t)\subset{\rm Aut}\xbm$  has Ratner's property (with the set $P$) if there exists a $\sigma$-compact metric space $(X',d')$ and a flow  $\ct'=(T'_t)\subset{\rm Aut}(X',\mathcal{B}',\mu')$ which has Ratner's property (with the set $P$) and the flows $\ct$ and $\ct'$ are measure-theoretically isomorphic. In such a situation we say that $\ct'$ is a good metric model of $\ct$.
\item It is shown in \cite{Fr-Le1} that Ratner's property does not depend on the choice of a good metric model.
    In particular, Ratner's property does not depend on the choice of metric in Definition~\ref{def:Ratner}.
    It follows that Ratner's property can be defined unambiguously  for flows defined on probability standard Borel spaces.

\item If $(T_t)\subset {\rm Aut}\xbm$ is a free ergodic $\R$-action with $(X,d)$ a $\sigma$-compact metric space and all $T_t$ being isometries then $\ct$  does not have  Ratner's property (e.g.\ consider the flow $T_t(x,y)=(x+t,x+\alpha t)$ on $\T^2$ with $\alpha$ irrational).
\end{enumerate}
\end{remark}

\begin{remark}\label{lin.flow}\em  Notice that if $R_t:\T\to \T$, $t\in\R$,
is the {\em linear} flow on the additive circle, i.e.\ $R_t(x)=x+t\,\mod 1 $, then $(R_t)$ satisfies Ratner's property (with any finite set $P$). Indeed, any two points are in one orbit so Definition~\ref{def:Ratner} holds trivially. The group of eigenvalues of this flow is $\Z$. It is not hard to see that the only ergodic flows which have discrete spectrum and (infinite) cyclic group of eigenvalues are rescalings of the linear flow. Such $\R$-actions are, up to isomorphisms, all ergodic (non-trivial) not free $\R$-actions.\footnote{Note that such flows have the minimal self-joining property.}
\end{remark}

In view of Remark \ref{mes:Rat}, from now on, we assume that a flow $\ct=(T_t)$ acts on probability standard space $\xbm$, where $(X,d)$ is a compact metric space. In view of Remark~\ref{lin.flow}, we will
assume that the $\R$-actions under consideration are free.

\subsection{Proof of Theorem \ref{main}}
We begin with the following lemma:
\begin{lemma}\label{tech}
If $\mathcal{T}=(T_t)$ is ergodic and has Ratner's property then $\{T_t\;:\;t\in \R\}$ is open in $C(\mathcal{T})$.
\end{lemma}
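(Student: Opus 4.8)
The plan is to show that the identity $\mathrm{Id}$ (and hence, by translation, every $T_t$) is an interior point of $\{T_s:s\in\R\}$ inside the Polish group $C(\mathcal{T})$. Concretely, I would argue by contradiction: if $\{T_s\}$ is not open, then there is a sequence $S_n\in C(\mathcal{T})$ with $S_n\to\mathrm{Id}$ strongly but $S_n\notin\{T_s:s\in\R\}$; more usefully, after passing to a subsequence I may assume $S_n\to\mathrm{Id}$ $\mu$-a.e.\ (Remark~\ref{pwzbieznosc}). The goal is to derive a contradiction by showing that for large $n$, $S_n$ must actually coincide with some $T_{s_n}$ with $s_n\to 0$.

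The heart of the argument is to feed the pair $(x, S_n x)$, for $x$ in a large set, into the Ratner divergence estimate \eqref{eqML}. Fix $t_0$ for which $(T_t)$ has the $R(t_0,P)$-property, fix $\eps>0$ small, choose $N$, and obtain $\kappa,\delta,Z$ as in Definition~\ref{def:Ratner}. For $n$ large, the set $X_n:=\{x: d(x,S_nx)<\delta,\ x,S_nx\in Z\}$ has measure close to $1$, and for $x\in X_n$ the points $x$ and $y=S_nx$ satisfy the hypotheses of the $R(t_0,P)$-property (note $x$ is not on the orbit of $S_nx$, since $S_n\notin\{T_s\}$ and the flow is free, so $S_n x = T_r x$ on a positive-measure set would force $S_n = T_r$ by the standard ergodicity-plus-commutation argument). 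So there exist $M,L\geq N$, $L/M\geq\kappa$, and $p\in P$ with
\[
\frac{1}{L}\bigl|\{n'\in[M,M+L]: d(T_{n't_0}x,\ T_{n't_0+p}S_nx)<\eps\}\bigr|>1-\eps.
\]
Now I use that $S_n$ commutes with the flow: $T_{n't_0+p}S_n x = S_n T_{n't_0+p} x$. Thus, writing $g_n:=S_n$ and $z=T_{n't_0}x$, the inequality says $d(z,\ g_n T_p z)<\eps$ for a $(1-\eps)$-fraction of $n'\in[M,M+L]$. The plan is to integrate this over $x\in X_n$ (using that $T_{nt_0}$ preserves $\mu$) and a Fubini/averaging argument over $n'\in[M,M+L]$ to conclude that
\[
\mu\bigl(\{z\in X: d(z,\ S_n T_p z)<\eps\}\bigr) > 1 - C\eps
\]
for some absolute constant $C$ and some $p=p_n\in P$ (the finiteness of $P$ is used to pass to a single $p$ along a subsequence). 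Since $P$ is finite and $\eps$ is arbitrary, letting $\eps\to 0$ along a diagonal sequence forces $S_n T_{p_n}=\mathrm{Id}$, i.e.\ $S_n = T_{-p_n}$ with $p_n\in P$, contradicting $S_n\notin\{T_s:s\in\R\}$.

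The main obstacle I anticipate is the bookkeeping in the averaging step: the parameters $M=M(x,y)$, $L=L(x,y)$ and $p=p(x,y)$ depend measurably (but uncontrollably) on $x$, so one cannot literally average the estimate over $x$ in one shot. The fix is standard in Ratner-property arguments: partition $X_n$ according to the value of $p(x,S_nx)\in P$ (finitely many pieces), keep the piece $X_n^{(p)}$ of largest measure, and then on that piece use the lower bound $L/M\geq\kappa$ together with $M,L\geq N$ to control the range $[M,M+L]$ from below; a compactness/diagonal extraction over increasing $N$ and decreasing $\eps$ then yields the contradiction. A second, more minor point to be careful about is justifying "$S_n=T_r$ whenever $S_n x\in\{T_s x\}$ on a positive measure set": this follows because $\{x: S_n x = T_r x\}$ is, for fixed $r$, an invariant set (as both $S_n$ and $T_r$ commute with the flow), hence of measure $0$ or $1$ by ergodicity, and a measurable selection gives a single $r$ working on a positive-measure — hence full-measure — set. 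Once Lemma~\ref{tech} is in place, Theorem~\ref{main} follows: $\{T_t\}$ is an open subgroup of the Polish group $C(\mathcal{T})$, hence also closed, and $EC(\mathcal{T})=C(\mathcal{T})/\{T_t\}$ is a discrete group, which is countable because $C(\mathcal{T})$ is second countable.
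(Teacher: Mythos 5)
Your overall strategy --- argue by contradiction, feed the pair $(x,S_nx)$ into \eqref{eqML}, and exploit the commutation $T_sS_n=S_nT_s$ --- is the same as the paper's, and your handling of the side issues (passing to a $\mu$-a.e.\ convergent subsequence, showing $S_nx$ is off the orbit of $x$ via ergodicity of the invariant sets $\{x:S_nx\in\{T_tx\}\}$) is sound. The genuine gap is the averaging step. You want to deduce $\mu(\{z: d(z,S_nT_pz)<\eps\})>1-C\eps$ from the fact that, for each good $x$, \emph{one} window $[M(x),M(x)+L(x)]$ of the $T_{t_0}$-orbit has density $>1-\eps$ in that set. Converting a single-window density statement into a bound on $\mu(F)$ requires the uniform (Egorov-type) ergodic theorem for $\raz_F$, whose threshold $N_1=N_1(F,\kappa)$ must be dominated by the $N$ you fed into the Ratner property; that is, $F$ must be fixed \emph{before} $N$ is chosen. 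But your set $E_n=\{z:d(z,S_nT_pz)<\eps\}$ depends on $n$, and $n$ can only be chosen after $N$ (you need $d(x,S_nx)<\delta(\eps,N)$, and $\delta$ depends on $N$). For a set defined only after $N$ is fixed, one window of length merely $\geq N$ per point says nothing about its measure. Your proposed repair (partition by $p$, diagonal extraction) addresses the dependence of $p,M,L$ on $x$ but not this circularity. A second, smaller flaw is the endgame: for a fixed $n$ you cannot let $\eps\to0$ (smaller $\eps$ forces smaller $\delta$, hence larger $n$), so you never reach $S_nT_{p_n}=\mathrm{Id}$ for a single $n$; at best a diagonal over $(n_k,\eps_k)$ gives $S_{n_k}T_{p_k}\to \mathrm{Id}$ in measure, whence $T_p=\mathrm{Id}$ for some $p\in P$, and the contradiction is with freeness of the flow rather than with $S_n\notin\{T_s:s\in\R\}$.

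The paper's proof sidesteps both problems by never estimating the measure of an $n$-dependent set. It fixes \emph{in advance} the sets $A_{k_0}=\{x:\, d(x,T_px)>k_0^{-1}\text{ for all }p\in P\}$ (of measure $\geq 1-10^{-4}$ by freeness) and $B$ (on which $S_n\to\mathrm{Id}$ uniformly for $n\geq n_k$), applies the uniform ergodic theorem of $(P2)$ and $(P3)$ to \emph{these} fixed sets to produce $N_0,N_1$, and only then takes $N\geq\max(N_0,N_1)$. On at least $3L/4$ of the Ratner window the orbit point lies in $A_{k_0}\cap B$, so the reverse triangle inequality gives a separation of at least $k_0^{-1}-k^{-1}>\eps$, and \eqref{eqML} simply fails for $(x,S_nx)$ --- no passage back from window densities to the measure of a set is needed. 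If you replace your set $E_n$ by these pre-selected sets, your argument collapses into the paper's.
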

Notice that Theorem \ref{main} is a straightforward consequence of Lemma \ref{tech}:
\begin{proof}[Proof of Theorem \ref{main}]
By Lemma \ref{tech} it follows that the topological group $EC(\mathcal{T})=C(\mathcal{T})/\{T_t\;:\; t\in\R\}$ is discrete (as $\{T_t\;:\; t\in\R\}$ is normal and clopen). But $EC(\mathcal{T})$ is separable (since $C(\mathcal{T})$ is separable). Hence $EC(\mathcal{T})$ is countable.
\end{proof}

Therefore, it is enough to prove Lemma \ref{tech}. Before we provide its proof, more definitions and observations will be needed. Let $\mathcal{T}=(T_t)$ be an ergodic flow such that $T_{t_0}$ is ergodic.  Fix a finite set $P\subset \R\setminus\{0\}$.
Set
\begin{equation}\label{ak}
A_k:=\{x\in X\;:\;d(x,T_{p}x)>k^{-1}\text{ holds  for }p\in P\}.
\end{equation}
We have the following:
\begin{enumerate}
\item[$(P1)$] $\lim_{k\to+\infty}\mu(A_k)=1$.
\item[$(P2)$] There exists $k_0\in \N$ such that for every $\kappa>0$, there exist a set $W_\kappa\subset X$, $\mu(W_\kappa)>0.99$ and a number $N_0\in \N$ such that for every $M,L\geq N_0$, $L/M\geq \kappa$ and every $x\in W_\kappa$, we have
\begin{equation}\label{egorov1}
\frac{1}{L}\big|\{n\in[M,M+L]\;:\; T_{nt_0} x\in A_{k_0}\}\big|\geq 0.99;
\end{equation}
\item[$(P3)$] Let $B\in \mathcal{B}$, $\mu(B)=1$. For every $\kappa>0$ there exist a  set $V_\kappa\subset X$, $\mu(V_\kappa)>0.99$ and a number $N_1\in \N$ such that for every $M,L\geq N_1$, $L/M\geq \kappa$ and every $x\in V_{\kappa}$, we have
\begin{equation}\label{egorov2}
\frac{1}{L}\big|\{n\in[M,M+L]\;:\; T_{nt_0} x\in B\}\big|\geq 0.99.
\end{equation}
\end{enumerate}
Indeed, notice that for every $k\in \N$, we have $A_{k}\subset A_{k+1}$. Let $A:=\bigcup_{k\geq 1}A_k$. Then $A^c$ is the set of periodic points for $\ct$ with periods belonging to $P$, so by ergodicity (of $\ct$), it has measure~$0$. Therefore $\mu(A)=1$ and by $A_{k}\subset A_{k+1}$ for $k\geq 1$, we get $(P1)$.
To prove $(P2)$ we use $(P1)$ to get the existence of $k_0\in \N$ such that $\mu(A_{k_0})\geq 1-10^{-4}$. Then $(P2)$ follows by the pointwise ergodic theorem for $\raz_{A_{k_0}}$ and $T_{t_0}$, and then Egorov's theorem which gives a uniform convergence on a set of arbitrarily large measure (in our case this set is $W_\kappa$).  For $(P3)$, just note that it is a particular case of the same argument used to show $(P2)$ (for a constant sequence of sets).

Assume now that  $(S_n)_{n\in\N}\subset C(\ct)$ and define
\begin{equation}\label{bk}
B_{k}:=\{x\in X\;:\;(\exists_{n_k})\;\; d(x,S_nx)<k^{-1} \text{  for all}\;n\geq n_k\},\;\text{ and }\;\; B:=\bigcap_{k\geq 1}B_k.
\end{equation}
We have the following:
\begin{enumerate}
\item[$(P4)$] If $S_n\to Id$ $\mu$-a.e., then $\mu(B)=1$ (note that if we assume only that $S_n\to Id$ strongly, as operators on $L^2\xbm$, then by Remark~\ref{pwzbieznosc}, we can replace $(S_n)$ by a $\mu$-a.e.\ convergent subsequence).
\item[$(P5)$] If $\{S_n:\:n\in\N\}\cap \{T_t\;:\;t\in \R\}=\emptyset$, then there exists a set $X_0$, $\mu(X_0)=1$, such that for every $n\in \N$ and every $x\in X_0$, we have
\begin{equation}\label{notorb}
S_nx\notin \{T_tx\;:\; t\in \R\}.
\end{equation}
\end{enumerate}
Indeed, for $(P4)$ it is enough to notice that for every $k\geq 1$, $\mu(B_k)=1$. To get $(P5)$ notice that for every $n\in \N$ the set
$$X_n:=\{x\in X\;:\; S_n(x)\in \{T_tx\;:\;t\in\R\}\}$$
is measurable: it is the $X$-projection of the measurable set $\{(x,t)\in X\times\R\;:\; S_nx=T_tx\}$. Moreover, $X_n$ is $\mathcal{T}$-invariant, hence by ergodicity, $\mu(X_n)\in\{0,1\}$. But by assumption, $S_n\notin \{T_t\;:\;t\in \R\}$, hence  $\mu(X_n)=0$. Finally, set $X_0=\cap_{n\geq 1} X_n^c$.

\begin{proof}[Proof of Lemma \ref{tech}]
To show that $\{T_t\;:\; t\in\R\}$ is open it is enough to show that for any sequence $(S_n)_{n\in\N}\subset C(\mathcal{T})$ such that $S_n\to Id$ strongly, there exists $n_0$ such that for $n\geq n_0$, $S_n=T_{t_n}$ for some $t_n\in\R$. We will argue by contradiction. Assume that there exists $(S_n)_{n\in\N}\subset C(\mathcal{T})$ such that $S_n\to Id$ strongly and
$$
\{S_n:\:n\in\N\}\cap \{T_t\;:\; t\in\R\}=\emptyset.
$$

By $(P5)$, after restricting to $X_0$, we may assume that \eqref{notorb} holds for $x\in X$.
Fix  a number $t_0\in \R$ such that $T_{t_0}$ automorphism is ergodic.  We will show that for every such $t_0$, $\mathcal{T}$ does not have the $R(t_0,P)$ property. Since an ergodic flow can have at most countably many non-ergodic time automorphisms, this will finish the proof of Lemma \ref{tech}. Since $t_0\in \R$ is fixed, we will denote the $T_{t_0}$ automorphism  by $T$.

Fix $k_0$ from $(P2)$ and let $0<\epsilon<\min(0.01,k_0^{-2})$ with $\kappa=\kappa(\epsilon)$ coming from the $R(t_0,P)$-property.
Let $N_0$ come from $(P2)$, $N_1$ from $(P3)$ (for $B\in \mathcal{B}$ defined in \eqref{bk}, notice that by $(P4)$, $\mu(B)=1$). Take  $N\geq\max(N_0,N_1)$. Let $\delta=\delta(\epsilon,N)$ and a set $Z=Z(\eps,N)$, $\mu(Z)\geq 1-\eps$ be given by the $R(t_0,P)$-property. Let $k>\max(\delta^{-2},\epsilon^{-2})$ and set $n=n_k$, where $n_k$ is given by~\eqref{bk}. We will show that
for every $x\in B\cap W_\kappa\cap V_\kappa$, $x$ and $S_nx$ do not satisfy~\eqref{eqML}. This means that $(x,S_nx)\notin Z\times Z$. This will finish the proof since by~\eqref{notorb}, $x$ and $S_nx$ are not in the same orbit of $\ct$, by \eqref{bk}, we have $d(x,S_nx)<k^{-1}<\delta^2<\delta$ and
$$
S_n^{-1}(Z)\cap (B\cap W_\kappa\cap V_\kappa\cap Z)=\emptyset,
$$
which yields a contradiction since all the sets in the intersection above have measure at least $0.99$.

Fix $x\in B\cap W_\kappa\cap V_\kappa$. Assume that $(x,S_nx)\in Z\times Z$. Then there exist $M,L\geq N$, $L/M\geq \kappa$ such that~\eqref{eqML} is satisfied for $x, S_nx$. Let (see \eqref{ak}, \eqref{bk})
\begin{equation}\label{def:w}
W:=\{i\in [M,M+L]\;:\; T_i x\in A_{k_0}\cap B\}.
\end{equation}
It follows by \eqref{egorov1} and \eqref{egorov2} that $|W|\geq \frac{3L}{4}$. By \eqref{ak} and \eqref{bk}, it follows that for every $i\in W$, we have for $p\in P$ (by the choice of $k_0$ and $k$)
\begin{multline*}
d(T_{i+p}x,T_{i}(S_nx))\geq d(T_{i+p}x,T_ix)-d(T_ix,T_{i}(S_nx))=\\d(T_{i+p}x,T_ix)-d(T_ix,S_n(T_ix))\geq
k_0^{-1}-k^{-1}\geq \epsilon^{1/2}-\epsilon^2>\eps.
\end{multline*}
Now, since $|W|\geq \frac{3L}{4}$, \eqref{eqML} is not satisfied for $x, S_nx$. This completes the proof.
\end{proof}

\subsection{Proof of Theorem \ref{main2}}

\begin{proposition}\label{prop1} If $\mathcal{T}=(T_t)$ is ergodic, has Ratner's property
then every (non-trivial) factor $\mathcal{S}=(S_t)$ of $\mathcal{T}$ acts freely and has Ratner's property.
\end{proposition}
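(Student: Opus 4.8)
The plan is to establish the two assertions separately. For freeness of the factor $\mathcal{S}=(S_t)$, suppose to the contrary that $S_{t_0}=\mathrm{id}$ for some $t_0\neq 0$. By Remark~\ref{lin.flow}, a non-free ergodic $\R$-action is, up to isomorphism, a rescaling of the linear flow on $\T$; so I would argue that $\mathcal{T}$ then has a factor isomorphic to such a rescaled linear flow, and in particular $\mathcal{T}$ has a non-trivial eigenvalue coming from this factor. One route to a contradiction is to show directly that the presence of such a periodic (rotation) factor is incompatible with the $R(t_0,P)$-property for the uncountably many $t_0$ witnessing Ratner's property: fibering $\mathcal{T}$ over the circle factor, pairs of points $x,y$ lying over circle-points at distance bounded below cannot have their $T_{nt_0}$-orbits stay $\eps$-close up to a bounded time shift $p\in P$, because the circle coordinates of $T_{nt_0}x$ and $T_{nt_0+p}y$ differ by a fixed nonzero amount for all $n$ once $\eps$ is small relative to that amount and to the diameter of $P$. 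This mirrors the argument already carried out in the proof of Lemma~\ref{tech} and in Remark~\ref{mes:Rat}(3). Thus any ergodic flow with Ratner's property — and hence any of its factors, which also has Ratner's property once we prove the second assertion — must be free.

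For the inheritance of Ratner's property, I would work with good metric models. Fix a factor $\pi\colon (X,\mathcal B,\mu)\to(Y,\mathcal C,\nu)$ intertwining $\mathcal T$ and $\mathcal S$. Choosing a good metric model of $\mathcal T$ on a compact metric space and pushing the construction through $\pi$, one obtains a metric $d_Y$ on a compact model of $Y$; concretely I would use the disintegration $\mu=\int_Y \mu_y\,d\nu(y)$ and either transport the metric via a Borel section or, more robustly, replace $d_Y$ by the Hausdorff-type or $L^1$-Wasserstein distance between conditional measures so that $\nu$-typical nearby points in $Y$ lift to pairs of points in $X$ that are $d$-close with large conditional probability. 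The key point is that the set $Z_Y(\eps,N)\subset Y$ should be taken so that for $y,y'\in Z_Y$ with $d_Y(y,y')<\delta_Y$, one can choose lifts $x\in\pi^{-1}(y)$, $x'\in\pi^{-1}(y')$ with $d(x,x')<\delta$ and with $x,x'\in Z_X(\eps,N)$ — this is arranged by a Fubini/Egorov argument, discarding a small-measure set in $Y$ where the fibres misbehave. Then the drift estimate \eqref{eqML} for $(x,x')$ descends verbatim to $(y,y')$ via the $1$-Lipschitz (or measure-non-increasing) property of $\pi$: $d_Y(S_{nt_0}y,S_{nt_0+p}y')\le d(T_{nt_0}x,T_{nt_0+p}x')$ on the relevant set of $n$, with the same $M,L,\kappa,p$. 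One must also check $x'$ is not on the $\mathcal T$-orbit of $x$: if $y'$ is not on the $\mathcal S$-orbit of $y$ this is automatic, and the orbit-exceptional set in $Y$ has measure controlled by ergodicity as in $(P5)$.

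The main obstacle I anticipate is the metric-transport step: Ratner's property is a metric statement, and a priori the quotient of a compact metric space by a measure-theoretic factor map need not carry a metric in which $\pi$ is Lipschitz and for which nearby points lift to nearby points with good conditional measure. Handling this cleanly — probably by passing to a metric model of $Y$ in which $\pi$ is realised as a genuine continuous (even Lipschitz) surjection after discarding a null set, invoking the metric-model independence of Ratner's property from Remark~\ref{mes:Rat}(2) — is the crux. Once the lifting statement "$\nu$-most nearby pairs in $Y$ lift to $\mu$-most nearby pairs in $X$" is in hand, everything else is a routine Egorov/Fubini bookkeeping exercise parallel to the derivation of $(P1)$–$(P3)$.
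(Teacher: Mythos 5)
The decisive tool you are missing is Lemma~\ref{FL}: a flow with Ratner's property is a \emph{finite} extension of every non-trivial factor. The paper invokes it at the outset, which lets one realise $X$ as $Y\times\{0,\dots,k-1\}$ with $\mu=\nu\otimes(\text{counting measure})$ and the metric $d(x,x')=d_Y(y,y')+d_k(i,i')$, $d_k$ discrete. With this in hand both assertions collapse: a non-free factor would give $\nu$-a.e.\ $y$ a period $r$ with $T_{nr}x\in\pi^{-1}(y)$ for all $n$, and freeness of $\mathcal T$ makes these points pairwise distinct, contradicting $|\pi^{-1}(y)|=k$; and for the inheritance of Ratner's property, for $\eps$ small compared to $1/k$ one has $Z\supset Z_Y\times\{0,\dots,k-1\}$ and any $\delta$-close pair in $Z$ automatically has equal second coordinate, so \eqref{eqML} descends verbatim. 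The "metric-transport" problem that you correctly identify as the crux of your approach simply does not arise, because the fibre is finite and discrete and the lifting of nearby pairs is trivial. Your Wasserstein/disintegration scheme is a plausible substitute in principle, but you leave it unproven (you would still have to check that $y\mapsto\mu_y$ gives a good metric model in the sense of Remark~\ref{mes:Rat}, that $\delta$-close pairs in $Y$ admit lifts lying in $Z\times Z$, etc.), so as written the second assertion is not established.

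The freeness argument has a further concrete gap. Your direct route requires the circle-eigenfunction to be metrically compatible with $d$ (the same transport issue again, fixable by Luzin but not addressed), and, more seriously, it fails outright when $cp\in\Z$ for the shifts $p\in P$ produced by the $R(t_0,P)$-property, where $c$ is the eigenvalue frequency: in that case $T_p$ acts trivially on the circle factor, the circle coordinates of $T_{nt_0}x$ and $T_{nt_0+p}y$ differ only by the (small) initial discrepancy, and no lower bound on $d(T_{nt_0}x,T_{nt_0+p}y)$ is obtained. Your fallback --- "any factor has Ratner's property, hence is free by Remark~\ref{mes:Rat}(3)" --- is also not available, both because your proof of the second assertion is incomplete and because Remark~\ref{mes:Rat}(3) rules out Ratner's property only for \emph{free} isometric actions, so it cannot be used to exclude a non-free factor; the circle rotation of Remark~\ref{lin.flow} in fact \emph{does} satisfy the definition vacuously. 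This is precisely why the paper derives freeness from the finite-fibre property rather than from a Kronecker-factor or eigenvalue argument.
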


We will use the following lemma (see \cite{Fr-Le1}, Remark 2.\ and Theorem 5.1.)

\begin{lemma}\label{FL}Let $\ct=(T_t)$ be an ergodic flow on a probability standard Borel space $\xbm$. If $T$ has Ratner's property, then $T$ is a finite extension of each of its non-trivial factors.
\end{lemma}
Now, we prove Proposition \ref{prop1}.
\begin{proof}[Proof of Proposition \ref{prop1}]
Let $\mathcal{S}=(S_t):(Y,\mathcal{C},\nu)\to (Y,\mathcal{C},\nu)$ be a factor of $\ct$, where $(Y,\mathcal{C},\nu)$ is probability standard Borel space, and $(Y,d_Y)$ is compact metric space. In view of Lemma~\ref{FL}, we can assume that $X=Y\times\{0,\ldots,k-1\}$, $\mu$ is the product of $\nu$ and the normalized counting measure on $\{0,\ldots,k-1\}$. Moreover, $X$ becomes a compact metric space with the metric
$d(x,x')=d_Y(y,y')+d_k(i,i')$, where $x=(y,i)$, $x'=(y',i')$ and $d_k$ stands for the discrete metric on $\{0,\ldots,k-1\}$. Let $\pi:X\to Y$ be given by $\pi(y,i)=y$ so that  $\pi\circ T_t=S_t\circ \pi$.

Let us first show that the action of $(S_t)$ is free.  Suppose that the action of $(S_t)$ is not free. Then, for $\nu$-a.e.\ $y\in Y$ there exists $r\in\R$ such that $S_ry=y$. Let $x\in X$ be such that $\pi(x)=y$. Then, for every $n\geq 1$,
we have
$$
\pi(T_{nr}x)=S_{nr}(\pi(x))=S_{nr}(y)=y,
$$
so for every $n\geq 1$,  we have $T_{nr}x\in \pi^{-1}(y)$. But since the action of $(T_t)$ is free, for $\mu$-a.e.\ $x\in X$, we have $T_{n_1s}x\neq T_{n_2s}x$ whenever $n_1\neq n_2$. We would get that $\pi^{-1}y$ is not finite, which is a contradiction.

It remains to show that $\mathcal{S}$ has Ratner's property.
Fix a number $t_0$ such that $(T_{t})$ has the $R(t_0,P)$ property. We will show that $(S_t)$ also has the $R(t_0,P)$ property (with the metric $d_Y$). Fix $\eps, N$ and let $\kappa, \delta, Z$ be as in the definition of $R(t_0,P)$ property for $(T_t)$. If $\eps>0$ is small enough compared to $1/k$, $Z\supset Z_Y\times\{0,\ldots,k-1\}$, where $Z_Y\in \mathcal{C}$ and $\nu(Z_Y)>1-\eps$. Notice that any $x,x'\in Z$ such that $d(x,x')<\delta$ are of the form $x=(y,i), x'=(y',i)$ for some $y,y'\in Z_Y$ and such that $d_Y(y,y')<\delta$. Take any $y,y'\in Z_Y$ such that $y$ not in the orbit of $y'$ and $d_Y(y,y')<\delta$. Then,  let $M,L,p$ be as in the definition of $R(t_0,P)$ for $x=(y,i)$ and $x'=(y',i)$. Then
\begin{multline*}
\frac{1}{L}\left\vert\left\{n\in[M,M+L]\;:\;d_Y(S_{nt_0}y,S_{nt_0+p}y')<\eps\right\}\right\vert\geq\\
\frac{1}{L}\left\vert\left\{n\in[M,M+L]\;:\;d(T_{nt_0}x,T_{nt_0+p}x')<\eps\right\}\right\vert\geq 1-\eps.
\end{multline*}
It follows that $\mathcal{S}$ has the $R(t_0,P)$ property.
This completes the proof of Proposition \ref{prop1}.
\end{proof}

\begin{remark}\label{uw3} \em It follows from \cite{Fr-Le1} that Lemma~\ref{FL} holds without the freeness assumption. By Remark~\ref{lin.flow}, also the assertion of Proposition~\ref{prop1} holds without the assumption that the action of $\mathcal{T}$ is free.
\end{remark}

Now we give the proof of Theorem \ref{main2}:

\begin{proof}[Proof of Theorem \ref{main2}]
Let $ST_t=T_tS$ for all $t\in\R$. Assume first that $S$ is not ergodic.
Notice that $\mathcal{A}:=\{A\in\mathcal{B}: SA=A\}$ is a factor of $(T_t)$. By Lemma \ref{FL} it follows that $(T_t)$ can be represented as a skew product over $Y$ (where $Y=X/\mathcal{A}$) with finite fibers. Let us denote this space by $Y\times\{0,\ldots,k-1\}$ for some $k\in \N$.  Since $S\in C(\mathcal{T})$, it follows that $S$ acts on $Y\times\{0,\ldots,k-1\}$ as the identity on the first coordinate. It is hence of the form $S(x,i)=(x,\tau_x(i))$, with $\tau_x$ being a bijection of $\{0,\ldots,k-1\}$. It easily follows that $S^{k!}=Id$.

Assume now that $S$ is ergodic. We will show that $S$ is mildly mixing in this case. Since $\mathcal{T}$ has Ratner's property, it follows that the Kronecker factor (for $\mathcal{T}$) is trivial. Indeed, if not then by Proposition \ref{prop1} the action on the Kronecker factor is free, so by Remark~\ref{mes:Rat}, it does not have Ratner's property which is a contradiction with Proposition~\ref{prop1}. Thus, $\mathcal{T}$ is weakly mixing. Furthermore, since $S\in C(\mathcal{T})$,  $S$ is weakly mixing. Suppose that $S$ is not mildly mixing and let $\cF$ be a non-trivial rigid factor of it. Then, $S|_{\cF}$ is rigid and $\cF$ is invariant under all $T_t$, whence $\cF$ is a non-trivial factor for $\ct$. Moreover, $S|_{\cF}$ is still weakly mixing, whence all its powers $(S|_{\cF})^n$, $n\in\Z$, are distinct (and in particular $(S|_{\cF})^n\neq Id$ for $n\neq 0$). More than that, if for some $0\neq n\in\Z$, we have $(S|_{\cF})^n=T_{r_n}|_{\cF}$, then $\ct|_{\cF}$ is rigid (since $(S|_{\cF})^n$ is rigid) and hence its essential centralizer would be uncountable which is in contradiction with Theorem~\ref{main} applied to $\ct|_{\cF}$, since the latter action has Ratner's property by Proposition~\ref{prop1}. It follows that
\begin{equation}\label{noteq}
(S|_{\cF})^k\notin \{T_{t}|_{\cF}\;:\; t\in\R\}\text{ for }0\neq k\in\Z.
\end{equation}
Since $S|_{\cF}$ is rigid, the group $\{T_t|_{\cF}:\:t\in\R\}$ is not open in $C(\ct|_{\cF})$. This yields a contradiction with Theorem~\ref{main}. Hence $S$ has no non-trivial rigid factors and so $S$ is mildly mixing. This completes the proof of Theorem \ref{main2}.
\end{proof}

\subsection{Final remarks} We have proved that for the flows satisfying Ratner's property the essential centralizer is countable (and discrete). We would like to emphasize that this fact is not a consequence of the aforementioned in Introduction finite fiber property of ergodic joinings of flows with Ratner's property. Indeed, it seems to be possible to adapt the construction of a rigid and simple automorphism from \cite{Ju-Ru1} to obtain a simple and rigid flow. We have been unable to decide whether
the essential centralizer of a flow with Ratner's property can indeed be infinite (in particular, can it contain an element of infinite order). If
$\mathcal{H}=(h_t)$ is a horocycle flow then $EC(\mathcal{H})$ is finite, see Corollary~4 in  \cite{Ra1}.
For another class of flows enjoying Ratner's property, so called von Neumann special flows over irrational rotations (with the rotation of bounded type) the same phenomenon has been proved \cite{Fr-Le1,Fr-Le2}.

Adam Kanigowski, Department of Mathematics, Penn State University; adkanigowski@gmail.com

Mariusz Lemańczyk, Faculty of Mathematics and Computer Science, Nicolaus Copernic University; mlem@mat.umk.pl

\end{document}